\title{The Brouwer fixed point theorem and the Borsuk--Ulam theorem}
\author{Anthony Carbery}
\thanks{}
\date{Revised May 2010}
\newtheorem{theorem}{Theorem}
\newtheorem{proposition}{Proposition}
\newtheorem{lemma}{Lemma}
\newtheorem{corollary}{Corollary}
\begin{document}

\maketitle

\section{Brouwer fixed point theorem}

\medskip
\noindent
The Brouwer fixed point theorem states that every continuous map 
$f: \mathbb{D}^n \rightarrow \mathbb{D}^n$ has a fixed point. When 
$n=1$ this is a trivial consequence of the intermediate value theorem.

\medskip
\noindent
In higher dimensions, if not, 
then for some $f$ and all $x \in \mathbb{D}^n$, $f(x) \neq x$. So the 
map $\tilde{f}: \mathbb{D}^n \rightarrow \mathbb{S}^{n-1}$ obtained by 
sending $x$ to the unique point on $\mathbb{S}^{n-1}$ on the line segment 
starting at $f(x)$ and passing through $x$ is continuous, and when restricted 
to the boundary $\partial \mathbb{D}^n = \mathbb{S}^{n-1}$ is the identity.

\medskip
\noindent
So to prove the Brouwer fixed point theorem it suffices to show there is no 
map $g : \mathbb{D}^n \rightarrow \mathbb{S}^{n-1}$ which restricted to the 
boundary $\mathbb{S}^{n-1}$ is the identity. (In fact, this is an equivalent 
formulation.)

\medskip
\noindent
It is enough, by a standard approximation argument, to prove this for $C^1$ 
maps $g$. Consider

$$\int_{\mathbb{D}^n} \det Dg = \int_{\mathbb{D}^n} d g_1 \wedge 
d g_2 \wedge \dots \wedge d g_n$$
where $Dg$ is the derivative matrix of $g$. On the one hand this is zero
as $Dg$ has less than full rank at each $x \in \mathbb{D}^n$, and on the other 
hand it equals, by Stokes' theorem,

$$\int_{\mathbb{S}^{n-1}} g_1 d g_2 \wedge \dots \wedge d g_n.$$
This quantity manifestly does not depend on the behaviour of $g_1$ in
the interior of $\mathbb{D}^n$, and, by symmetry, likewise depends
only on the restrictions of $g_2, \dots , g_n$ to $\mathbb{S}^{n-1}$.
But on $\mathbb{S}^{n-1}$, $g$ is the identity $I$, so that reversing 
the argument, this quantity also equals
$$\int_{\mathbb{D}^n} \det D I = |\mathbb{D}^n|.$$

\medskip
\noindent
In do Carmo's book {\em{Differential Forms and Applications}}, an 
argument along the above lines is attributed essentially to E. Lima. 
Is there a similarly simple proof of the Borsuk--Ulam theorem via
Stokes' theorem?

\section{Borsuk--Ulam theorem}

\medskip
\noindent
The Borsuk--Ulam theorem states that for every continuous map 
$f: \mathbb{S}^n \rightarrow \mathbb{R}^n$ there is some $x$ with 
$f(x) = f(-x)$. Once again, when $n=1$ this is a trivial consequence 
of the intermediate value theorem.

\medskip
\noindent
In higher dimensions, we first note that it suffices to prove this for
smooth $f$. Knowing it for smooth functions, uniformly approximate continuous 
$f: \mathbb{S}^n \rightarrow \mathbb{R}^n$ by smooth maps $f_m$, for each
of which there is an $x_m$ with $f_m(x_m) = f_m(-x_m)$; then there is a
subsequence (which we'll also call $x_m$) convergent to some 
$x \in \mathbb{S}^n$.
Then $f_m(x_m) = (f_m(x_m) - f(x_m)) + f(x_m)$; the first term is as
small as we please for $m$ sufficiently large, and the second term
converges to $f(x)$ by continuity of $f$. So $f_m(x_m) \rightarrow
f(x)$ and similarly  $f_m(-x_m)
\rightarrow f(-x)$.

\medskip
\noindent
So assume for contradiction that $f$ is smooth and that for all $x$, 
$f(x) \neq f(-x)$. Then 
$$\tilde{f}(x) := \frac{f(x) - f(-x)}{|f(x) - f(-x)|}$$ 
is a smooth map $\tilde{f} : \mathbb{S}^{n} \rightarrow
\mathbb{S}^{n-1}$ such that 
$\tilde{f}(-x) = - \tilde{f}(x)$ for all $x$, i.e. 
$\tilde{f}$ is {\em{odd, antipodal}} 
or {\em{equivariant}} with respect to the map $x \mapsto -x$.

\medskip
\noindent
Thus it suffices to show there is no equivariant smooth map 
$h : \mathbb{S}^n \rightarrow \mathbb{S}^{n-1}$. Equivalently, 
thinking of the closed upper hemisphere of $\mathbb{S}^n$ as 
$\mathbb{D}^n$, it suffices to show there is no smooth map 
$g: \mathbb{D}^n \rightarrow \mathbb{S}^{n-1}$ which is equivariant 
on the boundary. Once again this formulation is equivalent to the 
Borsuk--Ulam theorem and shows (since the identity map is equivariant) 
that it generalises the Brouwer fixed point thorem. 

\subsection{Case $n=2$}

\medskip
\noindent
As above, it's enough to show that there does not exist a $g: \mathbb{D}^2
\rightarrow \mathbb{S}^1$ which is equivariant on the boundary, i.e. such that $g(-x) =
-g(x)$ for $x \in \mathbb{S}^1$.

\medskip
\noindent
If there did exist such a (smooth) $g$, consider

$$\int_{\mathbb{D}^2} \det D g = \int_{\mathbb{D}^2} d g_1 \wedge d g_2$$
On the one hand this is zero as $Dg$ has less than full rank at each $x$, 
and on the other hand it equals, by Stokes' theorem,

$$ \int_{\mathbb{S}^1} g_1 d g_2 = - \int_{\mathbb{S}^1} g_2 d g_1$$

\medskip
\noindent
So it's enough to show that

$$\int_0^1 (g_1(t) g_2^{\prime}(t) - g_2(t) g_1^{\prime}(t))  dt \neq 0$$
for $g = (g_1,g_2) : \mathbb{R}/\mathbb{Z} \rightarrow \mathbb{S}^1$ 
satisfying $g(t + 1/2) = - g(t)$ for all $0 \leq t \leq 1$.

\medskip
\noindent
Now clearly 
$$(g_1(t) g_2^{\prime}(t) - g_2(t) g_1^{\prime}(t)) dt $$
represents the element of net arclength for the curve $(g_1(t),g_2(t))$
measured in the anticlockwise direction. Indeed, $|g| = 1$ implies
$\langle g, g^\prime \rangle = \frac{1}{2} \frac{d}{dt} |g|^2 = 0$, so that $\det(g,
g^{\prime}) = \pm |g||g^{\prime}| = \pm |g^{\prime}|$, with the plus
sign occuring when $g$ is moving anticlockwise. 
And the point of the equivariance condition is
that $(g_1(1/2),g_2(1/2)) = - (g_1(0),g_2(0))$, and that 
$$\int_0^1 g_1(t) g_2^{\prime}(t) dt = 2 \int_0^{1/2} g_1(t) g_2^{\prime}(t) dt.$$
In passing from $(g_1(0),g_2(0))$ to $(g_1(1/2),g_2(1/2))$ the total net
arclength traversed is clearly an odd multiple of $\pi$, and so we're done.

\medskip
\noindent
Note that the argument shows that for any smooth equivariant $h : \mathbb{S}^1 
\rightarrow \mathbb{S}^1$ we have
$$\int_{\mathbb{S}^1} h_1 d h_2 = - \int_{\mathbb{S}^1} h_2 d h_1 \neq
0,$$
and indeed is an odd multiple of $\pi$.

\medskip
\noindent
In the higher dimensional case $n \geq 3$ we'd be done by the same argument
if we could show that 

$$ \int_{\mathbb{S}^{n-1}} g_1 d g_2 \wedge d g_3 \dots \wedge d g_n \neq 0$$
whenever $g: \mathbb{S}^{n-1} \rightarrow \mathbb{S}^{n-1}$ is equivariant.

\subsection{Dimensional reduction}\label{dimred}
The material in this subsection is based upon notes by Shchepin
available at
\url{http://www.mi.ras.ru/~scepin/elem-proof-reduct.pdf} 
with some details added, and is adapted to the smooth case under consideration.

\begin{theorem}
Suppose $n \geq 4$ and there exists a smooth equivariant map $f: \mathbb{S}^n \rightarrow 
\mathbb{S}^{n-1}$. Then there exists a smooth equivariant map $\tilde{f}: \mathbb{S}^{n-1} 
\rightarrow \mathbb{S}^{n-2}$.
\end{theorem}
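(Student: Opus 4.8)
The plan is to pass from $f$ to the transverse preimage of an equatorial subsphere of the target. First, using Sard's theorem together with parametrized transversality — carried out downstairs on the induced map $\bar f : \mathbb{RP}^n \to \mathbb{RP}^{n-1}$, where the $\mathbb{Z}/2$-actions are free so that equivariance upstairs is automatic, and working up to uniform approximation by smooth maps as earlier in the paper — I would replace $f$ by a smooth equivariant map, still called $f$, which is transverse to an equatorial $S^{n-2}\subset S^{n-1}$. Then $L := f^{-1}(S^{n-2})$ is a closed smooth $(n-1)$-dimensional submanifold of $S^n$, invariant under the antipodal map $\tau$, on which $\tau$ acts freely, and $\phi := f|_L : L \to S^{n-2}$ is smooth and equivariant. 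Were $L$ the round sphere with $\tau|_L$ the antipodal map we would be finished; the content of the theorem is precisely that one can always reduce to that situation, and this is where $n \geq 4$ will be used.

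Next I would record the ambient structure. With $H_\pm$ the open hemispheres of $S^{n-1}$ bounded by $S^{n-2}$, the sets $U_\pm := f^{-1}(H_\pm)$ are disjoint open sets with $U_+ \cup L \cup U_- = S^n$, interchanged by $\tau$; so $W := \overline{U_+}$ is a compact $n$-manifold with $\partial W = L$, and $S^n = W \cup_L W'$ with $W' := \tau(W)$, the involution carrying $W$ diffeomorphically onto $W'$ and restricting to the free involution $\tau|_L$ on $L$. By van Kampen (as $\pi_1(S^n)=1$) the manifold $W$ is simply connected. A short computation with the covering $S^n \to \mathbb{RP}^n$ moreover shows that $\bar f$ pulls back the generator of $H^1(\mathbb{RP}^{n-1};\mathbb{Z}/2)$ to the generator of $H^1(\mathbb{RP}^n;\mathbb{Z}/2)$, so that $\bar L := L/\tau$ is mod-$2$ homologous in $\mathbb{RP}^n$ to the standard $\mathbb{RP}^{n-1}$.

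The substance of the proof is to upgrade $L$ to a sphere by $\tau$-equivariant surgeries — equivalently, surgeries on $\bar L$ inside $\mathbb{RP}^n$. The key point is that, for $j \leq \lfloor (n-1)/2 \rfloor$, the hypothesis $n \geq 4$ gives $j \leq n-3 < n-2$, hence $\pi_j(S^{n-2}) = 0$; so any embedded $S^j$ in $L$, together with its $\tau$-translate, bounds a disk on which $\phi$ extends, and the corresponding equivariant surgery can be performed compatibly with $\phi$. Performing such surgeries, I would first make $L$ connected and then kill $\ker(\pi_1(\bar L)\to\mathbb{Z}/2)$, i.e. make $\pi_1(\bar L) = \mathbb{Z}/2$ (so that $L$ is simply connected and the induced map $\bar\phi : \bar L \to \mathbb{RP}^{n-2}$ induces an isomorphism on $\pi_1$), and finally make $L$ highly connected. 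The genuine obstacle is the remaining middle-dimensional surgery: here one must exploit the rigid fact that $L$ bounds, and more precisely the decomposition $S^n = W \cup_L W'$, to see that the surgery obstruction vanishes — or else argue directly that the complementary region $W$ is a homotopy $n$-disk — so that $L$ may be taken to be a homotopy $(n-1)$-sphere and $\bar L$ homotopy equivalent to $\mathbb{RP}^{n-1}$. I expect this middle-dimensional step to be the main difficulty, and it is presumably the technical heart of Shchepin's argument; the condition $n \geq 4$ is needed both here and to create room for the low-dimensional surgeries above.

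Granting that $L$ is a homotopy $(n-1)$-sphere, I would finish by obstruction theory. The classifying map of the double cover $S^{n-1} \to \mathbb{RP}^{n-1}$, i.e. the generator of $[\mathbb{RP}^{n-1},\mathbb{RP}^\infty]$, lifts through $\bar L \to \mathbb{RP}^\infty$ — the obstructions lie in $H^{j+1}(\mathbb{RP}^{n-1};\pi_j(L))$, which vanish for $j < n-1$ since $L$ is a homotopy sphere, leaving only $H^n(\mathbb{RP}^{n-1};\mathbb{Z}) = 0$ — yielding a map $\mathbb{RP}^{n-1} \to \bar L$ that pulls the cover $L \to \bar L$ back to $S^{n-1} \to \mathbb{RP}^{n-1}$, that is, a $\mathbb{Z}/2$-equivariant map $p : S^{n-1} \to L$ (antipodal action on the source, $\tau|_L$ on the target). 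Then $\phi \circ p : S^{n-1} \to S^{n-2}$ is continuous and equivariant, and an equivariant smooth approximation, just as in the earlier passages of the paper, gives the required smooth equivariant $\tilde f$.
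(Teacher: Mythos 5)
Your plan is a genuinely different route from the paper's, and it contains a gap that you yourself flag. The paper's argument is elementary and geometric: Sard's theorem is used only to find antipodes $\pm A \in \mathbb{S}^{n-1}$ whose preimages $f^{-1}(\pm A)$ are (at most) one-dimensional; the union of geodesic great circles joining points of $f^{-1}(A)$ to points of $f^{-1}(-A)$ is then at most three-dimensional, so for $n \geq 4$ one can choose antipodes $\pm B \in \mathbb{S}^n$ lying outside this union, which means no meridian through $\pm B$ meets both $f^{-1}(A)$ and $f^{-1}(-A)$. A concrete equivariant ``latitude-shift'' diffeomorphism $\psi$ (Lemma \ref{one}) then pushes $f^{-1}(A)$ into the open upper hemisphere, so the restriction of $f\circ\psi$ to the equator $E_{n-1}$ misses $\pm A$ and retracts onto the target equator $E_{n-2}$. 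No transversality, no surgery, no cobordism, no obstruction theory beyond a single retraction. You instead let $L = f^{-1}(\mathbb{S}^{n-2})$ for a transverse representative, and try to upgrade $L$ to a homotopy sphere by $\tau$-equivariant surgery before finishing with obstruction theory.

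The gap is the middle-dimensional surgery, and your remark that it ``is presumably the technical heart of Shchepin's argument'' is a misapprehension: the cited argument avoids that difficulty entirely rather than confronting it. And the difficulty is real. Equivariant surgery on the free involution $\tau|_L$ is governed by Wall groups $L_*(\mathbb{Z}/2)$, which are not trivial, and you have offered no computation showing the obstruction vanishes for the $L$ that arises. Moreover the structural fact you propose to lean on — that $L$ bounds, indeed that $\mathbb{S}^n = W \cup_L W'$ — is a statement about a specific embedding of a specific $L$ in $\mathbb{S}^n$; once you start performing surgeries on $L$ abstractly (as you must, since you also change $\phi$) there is no reason the new $L$ still embeds equivariantly in $\mathbb{S}^n$ bounding a nice $W$, so the very leverage you invoke is not preserved by the operations you perform. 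Even if all of this could be repaired, you would still only obtain a \emph{homotopy} $(n-1)$-sphere, and the passage from that to a genuine smooth equivariant map $\mathbb{S}^{n-1}\to\mathbb{S}^{n-2}$ requires the obstruction-theoretic coda, where one must be careful with twisted coefficients. The moral is that your route tries to control a global invariant (the homotopy type of the full preimage manifold) where the theorem needs only a soft, local fact — that a single pair of fibers is low-dimensional enough to be dodged — and it is this dimension count on geodesics, not surgery, that makes $n\geq 4$ bite.
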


\medskip
\noindent
Once this is proved, only the case $n=3$ of the Borsuk--Ulam theorem remains 
outstanding. 
Note that only the case when $f$
is surjective is interesting here (as if $f$ avoids a pair of points
we can simply retract its image into an equator). 

\begin{proof}
Starting with $f$, we shall identify suitable equators $E_{n-1} \subseteq 
\mathbb{S}^{n}$ and $E_{n-2} \subseteq \mathbb{S}^{n-1}$, and build a
smooth equivariant map $\tilde{f} : E_{n-1} \rightarrow E_{n-2}$.

\medskip
\noindent
We first need to know that there is some pair of antipodal points $\{\pm A \}$
in the target $\mathbb{S}^{n-1}$ whose preimages under $f$ are at most
``one-dimensional''. 
This is intuitively clear by dimension counting but for rigour we
appeal to Sard's theorem.\footnote{Shchepin works instead with polyhedra homeomorphic to
spheres and piecewiese linear surjections between them. 
We may assume that the image of each face in the domain is contained in a face of 
the target. In this setting linear algebra shows that there is a pair of antipodes 
whose inverse images are finite unions of line segments.} Indeed,
Sard's theorem tells us that the image under $f$ of the set $\{x \in
\mathbb{S}^{n} :  \mbox{ rank }Df(x) < n-1 \}$ is of Lebesgue
measure zero: so there are plenty of points $A \in \mathbb{S}^{n-1}$
at all of whose preimages $x$ -- if there are any at all -- $Df(x)$ 
has full rank $n-1$. By the implicit function theorem, for each such 
$x$ there is a neighbourhood $B(x,r)$ such that $B(x,r) \cap
f^{-1}(A)$ is diffeomorphic to the interval $(-1,1)$. The whole 
of the compact set $f^{-1}(A)$ is covered by such balls, from 
which we can extract a finite subcover: so indeed $f^{-1}(A)$
is covered by finitely many diffeomorphic copies of $(-1,1)$.

\medskip
\noindent
Now for $x \in f^{-1}(A)$ and $y \in f^{-1}(-A)= - f^{-1}(A)$ with $y \neq - x$, 
consider the unique
geodesic great circle joining $x$ to $y$. The family of such is clearly indexed 
by the two-parameter family of points of $f^{-1}(A)\times f^{-1}(-A) \setminus\{(x, -x) \; : \;
f(x) = A \}$. 
Their union is therefore a manifold in $\mathbb{S}^n$ of dimension at most three. Since 
$n \geq 4$ there must be points $\pm B \in \mathbb{S}^n$
{\em{outside}} this union
(and necessarily outside $f^{-1}(A) \cup f^{-1}(-A)$). 
Such a point has the property that {\em{no}} geodesic great circle passing through 
it meets points of both $f^{-1}(A)$ and $f^{-1}(-A)$ other than possibly at antipodes. 
In particular, no {\em{meridian}} joining $\pm B$ meets both $f^{-1}(A)$ and 
$f^{-1}(-A)$.

\medskip
\noindent
We now identify $E_{n-2}$ as the equator of $\mathbb{S}^{n-1}$ whose
equatorial plane is perpendicular to the axis joining $A$ to $-A$; and
we identify $E_{n-1}$ as the equator of $\mathbb{S}^{n}$ whose
equatorial plane is perpendicular to the axis joining $B$ to $-B$.
We assume for notational simplicity that $B$ is the north pole 
$(0,0, \dots, 0, 1)$.

\begin{lemma}\label{one} 
Suppose $B = (0,0, \dots, 0, 1) 
\in \mathbb{S}^n$ and that  $X \subseteq \mathbb{S}^n$ is a closed 
subset such that no meridian joining $\pm B$ meets both $X$ and $-X$. 
Let $\mathbb{S}^{n}_{\pm}$ denote the open upper and lower hemispheres 
respectively. 
Then there is an equivariant diffeomorphism 
$\psi: \mathbb{S}^n \rightarrow \mathbb{S}^n$ 
such that 
$$ X \subseteq \psi (\mathbb{S}^{n}_{+}).$$ 
\end{lemma}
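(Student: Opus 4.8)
The plan is to realize $\psi$ as (the inverse of) the time-$T$ flow of a carefully chosen smooth, antipodally equivariant vector field $V$ on $\mathbb{S}^n$ which is tangent to the meridians through $\pm B$ and which ``pushes north'' precisely over the part of the sphere where $X$ lives. The first step is to extract the right information from the hypothesis. Let $p: \mathbb{S}^n \setminus \{\pm B\} \to \mathbb{S}^{n-1}$ send $x$ to the point where its meridian meets the equator $\{x_{n+1}=0\}$, i.e.\ $p(x) = (x_1,\dots,x_n,0)/\sqrt{1 - x_{n+1}^2}$, which is smooth and satisfies $p(-x) = -p(x)$. One checks immediately that $B, -B \notin X$ (if $B \in X$ then every meridian would meet both $X$ and $-X$, since each contains $B$ and $-B$), so $X$ is a compact subset of the domain of $p$; that $S := p(X)$ is compact with $S \cap (-S) = \emptyset$, because the meridian over $s$ meets $X$ exactly when $s \in S$ and meets $-X$ exactly when $-s \in S$; and, by compactness of $X$, that $X \subseteq \{|x_{n+1}| \le 1 - \varepsilon\}$ for some $\varepsilon > 0$.

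Next I would build the vector field. Since $S$ and $-S$ are disjoint compact sets, a smooth Urysohn function $\eta: \mathbb{S}^{n-1}\to[0,1]$ with $\eta\equiv 1$ near $S$ and $\eta\equiv 0$ near $-S$ exists; set $\chi(s) := \eta(s) - \eta(-s)$, a smooth \emph{odd} function with $\chi\equiv 1$ near $S$ and (automatically) $\chi\equiv -1$ near $-S$. Fix also a smooth \emph{even} cutoff $\beta:\mathbb{S}^n\to[0,1]$ depending only on $x_{n+1}$, with $\beta\equiv 1$ on $\{|x_{n+1}|\le 1-\varepsilon/2\}$ (hence $\beta\equiv 1$ on $X$) and $\beta\equiv 0$ near $\pm B$. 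Put
\[
V(x) := \beta(x)\,\chi(p(x))\,\operatorname{grad}(x_{n+1}), \qquad \operatorname{grad}(x_{n+1}) = e_{n+1} - x_{n+1}x,
\]
where $\operatorname{grad}(x_{n+1})$ is the spherical gradient of the height function: it is tangent to meridians, points toward $B$, vanishes only at $\pm B$, and $|\operatorname{grad}(x_{n+1})|^2 = 1-x_{n+1}^2$. Because $\beta$ vanishes near $\pm B$ (where $p$ is undefined), $V$ extends to a smooth field on all of $\mathbb{S}^n$; and a short sign check ($\beta$ even, $\chi\circ p$ odd, and $da_x$ sends $\operatorname{grad}(x_{n+1})|_x$ to $-\operatorname{grad}(x_{n+1})|_{-x}$ under the antipodal map $a$) shows $V$ is equivariant, so its flow $\Phi^t_V$ is a well-defined one-parameter group of equivariant diffeomorphisms (completeness is automatic on the compact manifold $\mathbb{S}^n$).

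Finally I would run the flow. Since $V$ is meridional, $p$ is constant along trajectories, so a trajectory starting at $x\in X$ stays over $S$, where $\chi = 1$ throughout and $\beta = 1$ as long as the trajectory stays in $\{|x_{n+1}|\le 1-\varepsilon/2\}$; there $\tfrac{d}{dt}x_{n+1}(\Phi^t_V x) = 1 - x_{n+1}^2$, bounded below by a constant $c>0$ on the relevant band of heights. As $\tfrac{d}{dt}x_{n+1} = \beta(1-x_{n+1}^2)\ge 0$ everywhere, $x_{n+1}$ is non-decreasing along every trajectory, and the lower bound forces every point of $X$ to reach $\{x_{n+1}\ge 1-\varepsilon/2\}\subseteq\mathbb{S}^n_+$ within a uniform time $T$ depending only on $\varepsilon$. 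Hence $\Phi^T_V(X)\subseteq\mathbb{S}^n_+$, and $\psi := (\Phi^T_V)^{-1} = \Phi^{-T}_V$ is an equivariant diffeomorphism with $X\subseteq\psi(\mathbb{S}^n_+)$.

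The main obstacle — essentially the only delicate point — is the tension between smoothness of $V$ at the poles and effectiveness of its flow: $V$ is assembled from the ``longitude'' data $\chi\circ p$, which is merely bounded and typically discontinuous at $\pm B$, so a cutoff near the poles is forced, yet the region where that cutoff is trivial must still contain all of $X$ and supply enough guaranteed northward speed to carry $X$ strictly across the equator in one uniform time. The choice of $\varepsilon$ above is exactly what makes these two demands compatible; everything else (the equivariance bookkeeping, the completeness of the flow, the estimate $|\operatorname{grad}(x_{n+1})|^2 = 1-x_{n+1}^2$) is routine.
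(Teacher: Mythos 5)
Your proof is correct, and it reaches the same goal via a genuinely different mechanism than the paper. You both use the same essential geometric input: $\pm B\notin X$, so $X$ stays a fixed latitude band away from the poles, and the meridional projection $S=p(X)$ onto the equator is disjoint from $-S$; and you both achieve the diffeomorphism by sliding points along meridians. The difference is in how the sliding is realized. The paper directly builds the new equator as a graph of latitude over the old equator, using an explicitly written odd function $\theta(x)=(\pi/2-\epsilon)\frac{d_+(x)-d_-(x)}{d_+(x)+d_-(x)}$ built from the distances to $\pm\Pi(X)$, mollifies it, lifts each equatorial point to the prescribed latitude, and then asserts (without much detail) that this extends to an equivariant diffeomorphism of all of $\mathbb{S}^n$. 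You instead assemble an odd meridional vector field $V=\beta\cdot(\chi\circ p)\cdot\operatorname{grad}(x_{n+1})$ and take $\psi$ to be a time-$(-T)$ flow map, which makes smoothness, equivariance, and the extension off the equator automatic consequences of ODE theory, at the cost of introducing flow machinery. A concrete bonus of your route: the Remark following the lemma asks for a smooth path of equivariant diffeomorphisms from the identity to $\psi$, and your construction gives this for free as $\psi_t=\Phi^{-tT}_V$, whereas the paper has to note it separately as ``clear from the construction.'' Both yield a $\psi$ that works in the subsequent dimensional-reduction step, since that step only needs $\psi$ equivariant with $X\subseteq\psi(\mathbb{S}^n_+)$.
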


\begin{proof}
Since $X$ is closed and $\pm B \notin X$, there is an $\epsilon >0$ such that 
spherical caps centred 
at $\pm B$ subtending angles of $2 \epsilon $ at the origin do not meet $\pm X$.
Consider the projections of $\pm X$ along meridians from $B$ and $-B$ on the 
equator $E$ of $\mathbb{S}^n$ which lies in the plane perpendicular 
to the axis joining $\pm B$; call these $\pm \Pi(X)$. 
These are disjoint closed sets and are therefore positively separated. For $ x \in E$ let 
$d_{\pm}(x)$ be the geodesic distance in $E$ to $\pm \Pi(X)$, and let
$$ \theta(x) = (\pi/2 - \epsilon)\frac{d_+(x) - d_-(x)}{d_+(x) + d_-(x)}.$$ 
Then for all $x \in E$, $\theta(-x) = - \theta(x)$ and $|\theta(x)| \leq \pi/2 - \epsilon$; 
for $x \in \Pi(X)$ we have $\theta(x) = - \pi/2 + \epsilon$ and for  $x \in -\Pi(X)$ we have 
$\theta(x) = \pi/2 - \epsilon$. Mollify $\theta$ if necessary to
obtain a {\em{smooth}} function $\tilde{\theta}: E \rightarrow [- \pi/2 +
  \epsilon /2 , \pi/2 - \epsilon /2]$ satisfying $\tilde{\theta}(-x) =
-\tilde{\theta}(x)$ and such that for  $x \in \Pi(X)$ we have
$- \pi/2 + \epsilon/2 \leq  \tilde{\theta}(x) \leq  - \pi/2 + 3 \epsilon/2 $. 

\medskip
\noindent
For $x \in E$ let $\psi(x)$ be the point of 
$\mathbb{S}^n$ on the meridian through $x$ with latitude 
$\tilde{\theta}(x)$. Then $\psi : E \rightarrow \mathbb{S}^n$ is a smooth 
equivariant map and clearly we can extend this to be an equivariant 
diffeomorphism $\psi$ of $\mathbb{S}^n$ to itself which maps
$\mathbb{S}^{n-1}_+$ to the region above $\psi(E)$. 

\medskip
\noindent
Finally, since for $x \in X $ we have $\tilde{\theta}(x) \leq - \pi/2 +
3 \epsilon /2$, we have that $X$ is contained in the region above
$\psi(E)$, that is, $X \subseteq \psi(\mathbb{S}^{n}_+)$.

\end{proof}

\medskip
\noindent
Continuing with the proof of the theorem, we apply the lemma with 
$X = f^{-1}(A)$. Let $\phi$ be restriction of $\psi$ to $E =
E_{n-1}$. Consider the restriction $\widehat{f}$ of $f$ to $\phi(E)$: 
it has the property that $\widehat{f}(\phi(E))$ does not contain $\pm
A$. Let $r$ be the standard retraction of 
$\mathbb{S}^{n-1} \setminus \{\pm A \}$ onto its equator $E_{n-2}$; finally 
let $\tilde{f} = r \circ \widehat{f} \circ \phi$, which is clearly
smooth and equivariant.

\end{proof}

\medskip
\noindent
{\bf{Remark.}} It is clear from the construction of $\psi$ in Lemma
\ref{one} that we may assume that it fixes meridians and acts as the
identity on small neighbourhoods of $\pm B$. Moreover it is also clear
that we may find a smoothly varying family of diffeomorphisms $\psi_t$
of $\mathbb{S}^n$, $0 \leq t \leq 1$, such that $\psi_0$ is the
identity and $\psi_1 = \psi$.

\subsection{Case $n=3$}

The treatment in Proposition \ref{prop} and Lemma \ref{two}
below is also based upon the notes of Shchepin peviously cited,
but it incorporates some significant differences of detail, and is adapted 
to the smooth case under consideration.

\begin{proposition}\label{prop}
Suppose that $f : \mathbb{S}^3 \rightarrow \mathbb{S}^2$ is a smooth
equivariant map. Then there exists a smooth 
${f}^\dagger : \mathbb{D}^3 \rightarrow \mathbb{S}^2$ which is equivariant on 
$ \partial \mathbb{D}^3 = \mathbb{S}^2$, and moreover maps $\mathbb{S}^2_{\pm}$ to itself.
\end{proposition}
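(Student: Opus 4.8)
I would realise $\mathbb{D}^3$ as the closed upper hemisphere of $\mathbb{S}^3$, so that $\partial\mathbb{D}^3$ is an equatorial two-sphere $E$ with its own open hemispheres $E_\pm$; the asserted conclusion is then $f^\dagger(E_\pm)\subseteq\mathbb{S}^2_\pm$, where $\mathbb{S}^2_\pm$ are the open hemispheres of the target about an axis $\pm A$ still to be chosen. The plan is to produce $f^\dagger$ in the form $f^\dagger=f\circ\Phi$ with $\Phi:\mathbb{D}^3\to\mathbb{S}^3$ smooth and equivariant on $E$. Since $\pi_2(\mathbb{S}^3)=0$, any map $E\to\mathbb{S}^3$ extends smoothly over $\mathbb{D}^3$, so it suffices to build a smooth equivariant $\Phi_0:E\to\mathbb{S}^3$ with $\Phi_0(E_\pm)\subseteq U^\pm:=f^{-1}(\mathbb{S}^2_\pm)$: then $f\circ\Phi$ is smooth, equivariant on $\partial\mathbb{D}^3$, and sends $E_\pm$ into $f(U^\pm)\subseteq\mathbb{S}^2_\pm$.

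To fix the axis I would use Sard's theorem, after a small equivariant perturbation of $f$, to arrange that $\pm A$ are regular values of $f$ and that $f$ is transverse to the great circle $E'=A^\perp\cap\mathbb{S}^2$. Then $N:=f^{-1}(E')$ is a nonempty closed orientable surface, invariant under the antipodal map $a$, separating $\mathbb{S}^3$ as $U^+\sqcup N\sqcup U^-$ with $a(U^+)=U^-$. Form the dual graph $G$, with one vertex per component of $U^+\sqcup U^-$ and one edge per component of $N$. Then $G$ is a finite tree: a cycle would give a loop in $\mathbb{S}^3$ meeting some component of $N$ transversally in a single point, which is impossible since every loop in $\mathbb{S}^3$ is null-homologous. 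Now $a$ acts on $G$ and carries components of $U^+$ bijectively onto components of $U^-$, hence fixes no vertex; by the fixed-centre property of finite trees it must fix an edge and interchange its endpoints. That edge is a component $C$ of $N$ with $a(C)=C$, bounding a component $R^+$ of $U^+$ on one side and $R^-=a(R^+)$ of $U^-$ on the other; and $a|_C$ is a free involution of the closed orientable surface $C$. (This is the point at which $n=3$ differs from the previous subsection: one cannot separate $f^{-1}(A)$ from $f^{-1}(-A)$ by a hemisphere of $\mathbb{S}^3$, because the relevant union of great circles is three-dimensional; instead one gains control of just this equivariant two-sphere's worth of the map.)

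Suppose for a moment that $C$ is a two-sphere. Every free involution of $\mathbb{S}^2$ is conjugate to the antipodal map, so $C$ carries an equivariant loop $\gamma:\mathbb{S}^1\to C$ --- the image of a great circle --- and $\gamma$ bounds a hemispherical disc of $C$, hence a disc in $\overline{R^+}$. Set $\Phi_0=\gamma$ on the equator of $E$; cone $\gamma$ across the closed upper hemisphere $\overline{E_+}$ into $\overline{R^+}$, pushing the interior into $R^+$; and set $\Phi_0(x):=a\,\Phi_0(ax)$ for $x$ in the closed lower hemisphere. This is a smooth equivariant $\Phi_0:E\to\mathbb{S}^3$ with $\Phi_0(E_\pm)\subseteq R^\pm\subseteq U^\pm$, and the first paragraph then delivers $f^\dagger$.

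The main obstacle --- and the step I expect to be hardest --- is to arrange that the self-symmetric component $C$ may in fact be taken to be a two-sphere; this is essential, not merely convenient. An equivariant loop in $N$ must lie in a single (self-symmetric) component and represents a homology class in the image of $1+a_*$ on $H_1(C)$, and one finds that once $C$ has positive genus such a class need not bound the disc we require, so the construction breaks down there. To circumvent this I would eliminate the positive-genus components of $N$: a positive-genus closed surface in $\mathbb{S}^3$ is compressible, and pushing the function $\langle f,A\rangle$ across a compressing disc together with its antipodal image --- which alters $f$ only near $N$ and keeps it equivariant --- lowers the genus of $N$; iterating, one reduces to the case in which $N$ is a union of spheres, where the tree argument again supplies a self-symmetric spherical component. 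The delicate part is to make this compression equivariant, in particular when a compressing disc meets its own antipodal image; an equivariant form of Dehn's lemma or of the loop theorem seems the natural instrument.
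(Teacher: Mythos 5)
Your proposal takes a genuinely different route from the paper, and most of its framework is sound. The paper never invokes 3-manifold topology: it restricts $\widehat{f}$ to $\partial\mathbb{D}^3$ to obtain $g:\mathbb{S}^2\to\mathbb{S}^2$, uses Lemma~\ref{two} (a purely two-dimensional construction resting on Sard's theorem and Lemma~\ref{one}) to produce a smooth equivariant hemisphere-preserving $g^\dagger$ together with a smooth family of diffeomorphisms $\psi_t$, $\psi_0=I$, $\psi_1=\psi$, such that $g^\dagger(x)\neq -(g\circ\psi)(x)$ for all $x$, and then \emph{grafts} $g^\dagger$ onto the outside of $\widehat{f}$ by a three-shell radial formula (inner shell a rescaled $\widehat{f}$, middle shell running through $g\circ\psi_{3t-1}$, outer shell a spherical interpolation between $g\circ\psi$ and $g^\dagger$, which makes sense precisely because they are never antipodal), followed by mollification. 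You instead precompose $f$ with a self-map $\Phi$ of $\mathbb{D}^3\subset\mathbb{S}^3$ built from a boundary map $\Phi_0$ guided by the level set $N=f^{-1}(E')$. The use of $\pi_2(\mathbb{S}^3)=0$ to reduce to the boundary, the argument that the dual graph of the decomposition $U^+\sqcup N\sqcup U^-$ is a tree, and the fixed-edge argument for the antipodal involution on that tree are all correct and attractive, and the construction of $\Phi_0$ on the equator and the two closed hemispheres does work under the standing assumption that the self-symmetric component $C$ is a $2$-sphere.

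That assumption, though, is exactly where the proof lives, and it is left open. If $C$ has positive genus, your equivariant loop $\gamma\subset C$ need not bound a disc in $\overline{R^+}$ --- indeed $R^+$ need not be simply connected, so null-homotopy of $\gamma$ in $\overline{R^+}$ is genuinely in question --- and the filling of $\overline{E_+}$ fails. Your proposed remedy, equivariant genus reduction of $N$, is essentially an appeal to the Meeks--Yau equivariant loop theorem/Dehn's lemma: a deep theorem whose use would be out of all proportion to the elementary machinery (Sard's theorem, Stokes' theorem, basic analysis) that the rest of the paper lives on. Even granting it, you would still need an explicit construction of the perturbed $f$ after each compression that stays smooth, equivariant, and transverse to $E'$, and a check that the tree-and-centre argument, rerun on the compressed $N$, again selects a spherical self-symmetric component. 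So as written the proposal correctly identifies, but does not close, the key gap; the paper's Lemma~\ref{two} is precisely the elementary ingredient that sidesteps all of this by working entirely on the boundary two-sphere rather than on level sets inside $\mathbb{S}^3$.
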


\begin{proof}
Firstly, by identifying the closed upper hemisphere of $\mathbb{S}^3$ with the closed
disc $\mathbb{D}^3$ we obtain a smooth map
$$ \widehat{f}: \mathbb{D}^3 \rightarrow \mathbb{S}^2$$
which is equivariant on $\partial \mathbb{D}^3 = \mathbb{S}^2.$

\medskip
\noindent
Then the restriction of $\widehat{f}$ to $\partial \mathbb{D}^3 = \mathbb{S}^2$
gives a smooth equivariant map $g : \mathbb{S}^2 \rightarrow \mathbb{S}^2$.
If we could take $g$ to be the identity, we would be finished (by the
argument given for the Brouwer fixed point theorem). Further, if $g$
is such that {\em{either}} for all $x$, $g(x) \neq x$, {\em{or}} for
all $x$, $g(x) \neq -x$, we can use the map
$$ \frac{ \theta g(x) \pm (1- \theta) {x}}{|\theta g(x) \pm (1-
  \theta) {x}|}$$ 
to ``graft the identity onto the outside of $\widehat{f}$'' to obtain a map
to which the Brouwer argument applies. So the ``bad'' $g$ are those
for which there exist $x$ and $y$ with $g(x) = x$ and $g(y) = -y$. 
For such $g$ we might hope instead to be able to graft an equivariant 
hemisphere-preserving $g^{\dagger}$ onto the outside of $\widehat{f}$
-- or what is almost as good, to find such a $g^{\dagger}$ and a
smooth family of diffeomorphisms $\psi_t: \mathbb{S}^2 \rightarrow
\mathbb{S}^2$ such that $\psi_0$ is the identity and $\psi_1 = \psi$
where $g^\dagger(x) \neq - g \circ \psi(x)$ for all $x$.

\medskip
\noindent
\begin{lemma}\label{two} 
If $g : \mathbb{S}^2 \rightarrow \mathbb{S}^2$ is a smooth equivariant map, then there exists 
a smooth equivariant ${g}^\dagger : \mathbb{S}^2 \rightarrow \mathbb{S}^2$ which preserves the upper
and lower hemispheres of $\mathbb{S}^2$, and a smooth family of
diffeomeorphisms $\psi_t$ such that $\psi_0 = I$, $\psi_1 = \psi$ 
and such that for all $x$, 
$$ g^\dagger(x) \neq - g \circ \psi(x).$$
\end{lemma}


\begin{proof}
Pick points $\pm A$ in the target $\mathbb{S}^2$ whose inverse images
under $g$ are at most finite. This we can do by Sard's theorem as the image
under $g$ of the set $\{ x \in \mathbb{S}^2 : \mbox{ rank }
Dg(x) < 2 \}$ has Lebesgue measure zero in $\mathbb{S}^2.$ So there are plenty
of points $A \in \mathbb{S}^2$ at all of whose preimages $x$ (if there are
any at all), $Dg(x)$ has full rank $2$. By the inverse function theorem, for each such $x$ 
there is a neighbourhood $B(x,r)$ such that $B(x,r) \cap g^{-1}(A) = \{x\}.$ 
The whole of the compact set $g^{-1}(A)$ is covered by such balls,
from which we can extract a finite subcover: so indeed $g^{-1}(A)$
consists of (at most) finitely many points. We may assume without loss
of generality that $A$ is the north pole.

\medskip
\noindent
Pick now $\pm B$ in the domain $\mathbb{S}^2$ such
that the merdinial projections of the members of $g^{-1}(\pm A)$ on
the equator whose plane is perpendicular to the axis joining $\pm B$ are
distinct. Assume without loss of generality that $B$ is the north
pole. Now applying Lemma \ref{one} (and the remarks following it) with $X = g^{-1}(A)$ we see
there is an equivariant diffeomorphism $\psi : \mathbb{S}^2 \rightarrow
\mathbb{S}^2$ 
such that $g^{-1}(A) \subseteq \psi(\mathbb{S}^{2}_+)$.
Thus $g_\ast := g \circ \psi$ is a smooth equivariant map from $\mathbb{S}^2$ to
itself and $g_\ast^{-1}(A) \subseteq \mathbb{S}^{2}_+$. Moreover $\psi
= \psi_1$ where $\psi_t$ is a smooth family of diffeomorphisms such
that $\psi_0 = I$.



\medskip
\noindent
Let $E$ be the equator and define $\tilde{g} : E \rightarrow E$ as the 
meridinial projection of $g_\ast(x)$ on $E$. It is well-defined
since $g_\ast^{-1}(\pm A) \cap E = \emptyset $. Then $\tilde{g}$ is
clearly smooth and equivariant.

\medskip
\noindent
We next extend $\tilde{g}$ to a small strip around $E$. For
$x \in \mathbb{S}^2$ let $l(x) \in [-\pi/2, \pi/2]$ denote its 
latitude with respect to $E$, and (if $x \neq \pm B$) let 
$\overline{x}$ denote its meridinial projection on $E$. 
For $0 < r < \pi/2$ let $E_r = \{x \in \mathbb{S}^2  :  |l(x)| \leq
r\}.$ Consider only $r$ so small that $E_r \cap g_\ast^{-1}(\pm A) 
= \emptyset$. Let $d = \mbox{dist }(\pm A, g_\ast(E))$.
Since $g_\ast$ is uniformly continuous, there is an $r>0$ such that for $x \in E_r$ 
we have $d(g_\ast (x), g_\ast(\overline{x})) < d/10 $. 
Now extend $\tilde{g}$ to $E_r$ by defining 
$\tilde{g}(x)$ to be the point with the same longitude (merdinial projection) 
as $\tilde{g}(\overline{x})$ and with latitude $ \pi l(x) /2r$. This extension is still
equivariant and smooth.

\medskip
\noindent
Finally extend $\tilde{g}$ to a map from $\mathbb{S}^2$ to itself by defining 
$\tilde{g}(x) = A$ for $l(x)> r$ and $\tilde{g}(x) = - A$ for $l(x)< -r$.
We see that $\tilde{g}$ is continuous, equivariant, preserves the upper 
and lower hemispheres and -- except possibly on the sets $\{ x : l(x)
= \pm r \}$ -- is smooth.

\medskip
\noindent
Consider, for $x \in E_r$, the three points $g_\ast (x)$, 
$g_\ast(\overline{x})$ and $\tilde{g}(x)$. Now $g_\ast (\overline{x})$
and $\tilde{g}(x)$ lie on the same meridian, and $g_\ast
(\overline{x})$ is distant at least $d$
from $\pm A$. On the other hand, ${g}_\ast(x)$ is at most $d/10$ 
from ${g}_\ast(\overline{x})$. Thus $g_\ast(x)$ is at least $9d/10$
from $\pm A$ and lives in a $d/10$-neighbourhood of the common meridian
containing $g_\ast (\overline{x})$ and $\tilde{g}(x)$. So for $x \in
E_r$, $g_\ast(x)$ cannot equal $-\tilde{g}(x)$. 
For $l(x) > r$ we have $\tilde{g}(x) = A$ and 
$g_\ast(x) \neq -A$ because $g_\ast^{-1}(-A)$ is contained in the lower hemisphere. Similarly
for $l(x) < -r$, $\tilde{g}(x) \neq -g_\ast(x)$. Thus for all $x \in \mathbb{S}^2$ 
we have $\tilde{g}(x) \neq -g_\ast (x)$. By continuity, $\tilde{g}(x)$ and
$-g_\ast (x)$ are positively separated on $\mathbb{S}^2$. So for any sufficiently small
uniform perturbation $\tilde{\tilde{g}}$ of $\tilde{g}$ we will have the
same property $\tilde{\tilde{g}}(x) \neq - g_\ast (x)$ for all $x$.
 
\medskip
\noindent
We now mollify $\tilde{g}$ in small neighbourhoods of $\{ x : l(x) =
\pm r \}$ (and then renormalise to ensure that the target space
remains $\mathbb{S}^2$ !) to obtain $g^{\dagger}$
which is smooth, equivariant, preserves the upper and lower hemispheres
and, being uniformly very close to $\tilde{g}$, is such that 
$g^{\dagger}(x) \neq -g_\ast(x)$ for all $x$.




\end{proof}


\medskip
\noindent
Finishing now with the proof of the proposition, for $x \in \mathbb{S}^2$ 
and $0 \leq t \leq 1$, 
let $\tilde{f}: \mathbb{D}^3 \rightarrow \mathbb{S}^2$ be defined by 

$$\tilde{f}(tx) = \frac{(3t-2){g}^\dagger(x) + (3 - 3t)(g \circ \psi) (x)}
{|(3t-2){g}^\dagger(x) + (3 - 3t)(g \circ \psi) (x)|} \mbox{ when } 2/3 \leq t \leq 1,$$

$$\tilde{f}(tx) = g \circ \psi_{3t-1}(x) \mbox{ when } 1/3 \leq t \leq 2/3 $$
\noindent
and
$$ \tilde{f}(tx) = \widehat{f}(3t x) \mbox{ when } 0 \leq t \leq 1/3.$$




\medskip
\noindent
This makes sense because for all $2/3 \leq t \leq 1$ we have 
$(3t-2){g}^\dagger(x) + (3 - 3t)(g \circ \psi) (x) \neq  0$; then $\tilde{f}$ has
all the desired properties of $f^\dagger$ (including continuity)
except possibly for smoothness at $t
= 1/3$ and $2/3 $. To rectify this we mollify $\tilde{f}$ in a small
neighbourhood of $\{ t = 1/3 \}$ and $\{ t = 2/3 \}$ and renormalise 
once more to ensure that the target space is indeed still
$\mathbb{S}^2$. The resulting $f^\dagger$ now has all the properties we need.
\end{proof}

\medskip
\noindent
Let $C$ be the cylinder $\mathbb{D}^2 \times [-1,1]$ in $\mathbb{R}^3$
with top and bottom faces ${D}_{\pm}$ and curved vertical
boundary $V = \mathbb{S}^1 \times [-1,1]$. Let $S_\pm$ be the upper and
lower halves of $S = \partial C$. Let $E$ be the equator of $S$.


\medskip
\noindent
Now $C$, with the all points on each vertical line of $V$ identified, 
is diffeomorphic to $\mathbb{D}^3$, and $S$ is also diffeomorphic
to $\mathbb{S}^2$, so that we immediately get:

\begin{corollary}
Under the same hypotheses as Proposition \ref{prop}, there exists a
smooth map from $C$ to $S$ which is equivariant on $\partial C$, 
which is constant on vertical lines in $V$ and which maps 
${D}_{\pm}$ into $ S_{\pm}.$
\end{corollary}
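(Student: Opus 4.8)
The corollary is, as the paragraph immediately before it indicates, essentially a matter of composing $f^\dagger$ (from Proposition~\ref{prop}) with the two advertised identifications; the only thing worth spelling out is that these identifications can be realised by genuinely smooth maps. The plan is to produce a smooth ``collapsing map'' $q\colon C\to\mathbb{D}^3$ that crushes each vertical segment of $V$ to a point, together with a fixed equivariant diffeomorphism $\Phi\colon S\to\mathbb{S}^2$, and then to take
$$ F\ :=\ \Phi^{-1}\circ f^\dagger\circ q\ \colon\ C\longrightarrow S. $$
Every property demanded of $F$ (smoothness; equivariance on $\partial C$; constancy on vertical lines of $V$; sending $D_\pm$ into $S_\pm$) will then be inherited in an obvious way from the corresponding feature of $q$, $f^\dagger$ and $\Phi$.

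For $q$ I would exhibit an explicit formula. Writing $\mathbb{D}^3$ as the closed unit ball in $\mathbb{R}^2\times\mathbb{R}$ and $C=\mathbb{D}^2\times[-1,1]$, put
$$ q(y,t)\ =\ \bigl(\,s(|y|)\,y,\ \ t\cos(\tfrac{\pi}{2}|y|)\,\bigr),\qquad s(r):=\tfrac{\sin(\pi r/2)}{r}\ \ (s(0):=\tfrac{\pi}{2}). $$
Since $s$ is a smooth even function of $r$, hence a smooth function of $|y|^2$, this defines a smooth map on a neighbourhood of $C$ in $\mathbb{R}^3$; and $|q(y,t)|^2=\sin^2(\tfrac{\pi}{2}|y|)+t^2\cos^2(\tfrac{\pi}{2}|y|)\le 1$, with equality exactly when $|y|=1$ or $|t|=1$. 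Hence $q$ maps $C$ into $\mathbb{D}^3$ and $\partial C=S$ into $\partial\mathbb{D}^3=\mathbb{S}^2$ (indeed one checks it restricts to a diffeomorphism $\mathring{C}\to\mathring{\mathbb{D}^3}$); it is equivariant for $(y,t)\mapsto(-y,-t)$ on $C$ and $v\mapsto -v$ on $\mathbb{D}^3$; on $V$ it is the constant $(y,0)$ along each vertical line, so $q(V)$ is the equatorial circle; and $q$ carries $D_+$ onto the closed upper hemisphere and $D_-$ onto the closed lower hemisphere of $\mathbb{S}^2$. For $\Phi$ I would invoke the standard fact that the cylinder boundary $S$, with its smooth structure as a $2$-sphere (the two rim circles $\mathbb{S}^1\times\{\pm1\}$ rounded) and with the free involution $(y,t)\mapsto(-y,-t)$, is equivariantly diffeomorphic to $(\mathbb{S}^2,\ v\mapsto -v)$ by a diffeomorphism taking $S_\pm$ to the two closed hemispheres and $E$ to the equator; after a rotation of the target we may take those hemispheres to be the pair $\mathbb{S}^2_\pm$ preserved by $f^\dagger$.

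Granting $q$ and $\Phi$, the verification is routine. The composite $F=\Phi^{-1}\circ f^\dagger\circ q$ is smooth because $q$ is the restriction of a smooth map defined near $C$ in $\mathbb{R}^3$ (carrying interior to interior and boundary to boundary), $f^\dagger$ is smooth on $\mathbb{D}^3$, and $\Phi^{-1}$ is a diffeomorphism; $F$ is constant on each vertical line of $V$ because $q$ already is; $F$ is equivariant on $\partial C$ because $q|_S$, $f^\dagger|_{\mathbb{S}^2}$ and $\Phi^{-1}$ are each equivariant; and $F$ sends $D_\pm$ into $S_\pm$ because $q(D_\pm)$ is the closed hemisphere $\mathbb{S}^2_\pm$, $f^\dagger$ maps $\mathbb{S}^2_\pm$ into itself by Proposition~\ref{prop}, and $\Phi^{-1}(\mathbb{S}^2_\pm)=S_\pm$. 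The one spot that is not pure formality --- and the part I would treat most carefully --- is smoothness along the edge circles $\mathbb{S}^1\times\{\pm1\}$ where the cylinder has corners; this is precisely why I prefer to write $q$ by a formula that is manifestly smooth across those circles, and why $\Phi$ must be an honest diffeomorphism for the rounded smooth structure on $S$ rather than merely a corner-respecting homeomorphism.
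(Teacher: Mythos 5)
Your proof is correct and takes essentially the same route the paper intends: the paper's own ``proof'' of this corollary is the one-line observation that $C$ with vertical lines of $V$ collapsed is diffeomorphic to $\mathbb{D}^3$ and $S$ is diffeomorphic to $\mathbb{S}^2$, and you have simply made those identifications explicit via the smooth collapsing map $q$ and the equivariant parametrisation of $S$ by $\mathbb{S}^2$, then composed with $f^\dagger$. The one phrasing I would tighten is the role of $\Phi$: what the final Stokes argument actually needs is that $\Phi^{-1}$ be smooth \emph{as a map into $\mathbb{R}^3$} (a homeomorphism onto $S$ whose derivative degenerates along the preimages of the two rim circles), which is a slightly different statement from $\Phi$ being a diffeomorphism for an abstract rounded structure on $S$ --- but this is a presentational point, not a gap.
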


\medskip
\noindent
The final argument needed to complete the proof of the Borsuk--Ulam
theorem is:

\begin{proposition}
There is no smooth map $f: C \rightarrow S$ which is equivariant
on $\partial C$, which is constant on vertical lines in $V$ and which maps 
${D}_{\pm}$ into $ S_{\pm}$.
\end{proposition}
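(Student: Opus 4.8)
The plan is to run the same differential-form/Stokes argument that handled the Brouwer case and the case $n=2$, but now on the cylinder $C$ rather than on $\mathbb{D}^3$ directly. Suppose for contradiction such an $f=(f_1,f_2,f_3):C\to S$ exists. Since $f$ maps into the $2$-sphere $S$, the $2$-form $df_1\wedge df_2\wedge df_3$ vanishes identically on the $3$-manifold $C$ (at each point the differential $Df$ has rank at most $2$). Hence by Stokes' theorem
$$0=\int_C d f_1\wedge d f_2\wedge d f_3 = \int_{\partial C} f_1\, d f_2\wedge d f_3.$$
So it suffices to show that $\int_{\partial C} f_1\, df_2\wedge df_3\neq 0$. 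This is the same ``degree-type'' integral that appeared at the end of the $n=2$ subsection, now over $\partial C=S_+\cup S_-$, a topological $2$-sphere, with the two extra structural constraints: $f$ is equivariant on $\partial C$, $f$ is constant on the vertical lines of $V$, and $f(D_\pm)\subseteq S_\pm$.

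First I would split $\partial C = D_+\cup V\cup D_-$ and exploit the constancy of $f$ on vertical lines to kill the contribution of $V$: on $V=\mathbb{S}^1\times[-1,1]$ the map $f$ factors through the projection to $\mathbb{S}^1$, so $df_1\wedge df_2\wedge df_3$ pulled back to $V$ is a pullback of a $2$-form from $\mathbb{S}^1$ and therefore $df_2\wedge df_3|_V\equiv 0$; thus $\int_V f_1\,df_2\wedge df_3=0$. Therefore
$$\int_{\partial C} f_1\, d f_2\wedge d f_3 = \int_{D_+} f_1\, d f_2\wedge d f_3 + \int_{D_-} f_1\, d f_2\wedge d f_3.$$
Next I would use the antipodal symmetry. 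Identifying $\partial C$ with $\mathbb{S}^2$ and using $f(-x)=-f(x)$ on $\partial C$, the antipodal map sends $D_+$ to $D_-$ (with an orientation change) and the integrand $f_1\,df_2\wedge df_3$ is even under $x\mapsto -x$, $f\mapsto -f$ (odd times odd times odd times a sign); keeping careful track of the orientation reversal one finds the two face-integrals are equal, so the total is $2\int_{D_+} f_1\,df_2\wedge df_3$. It then remains to show $\int_{D_+} f_1\,df_2\wedge df_3\neq 0$. Here I would apply Stokes on the $3$-disc $D_+$ is wrong — $D_+$ is $2$-dimensional — instead I interpret $\int_{D_+} f_1\,df_2\wedge df_3 = \int_{D_+} f^{*}(\text{something})$ as (a multiple of) the signed area swept on $S$ by $f|_{D_+}$, i.e. the degree of $f|_{D_+}:D_+\to S$ relative to its boundary values. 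The boundary $\partial D_+=E$ is the equator, $f(E)\subseteq V$-image hence lies on the equator-image level, and $f|_E$ is essentially the $n=2$ equivariant map to which the computation of the previous subsection applies: that computation shows $f|_E$ wraps an odd number of times, so the relative degree of $f|_{D_+}$ (mapping into the closed upper hemisphere $S_+$) is forced to be a nonzero half-integer multiple of $|S|$, giving $\int_{D_+} f_1\,df_2\wedge df_3\neq 0$.

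The main obstacle is exactly this last step: making rigorous the claim that the boundary-only integral $\int_{D_+} f_1\,df_2\wedge df_3$ is determined by $f|_E$ and is nonzero. The integrand is closed on $D_+$ only after restricting to the pullback of the $2$-sphere, and $f|_{D_+}$ need not be a diffeomorphism, so I cannot literally invoke a change of variables. The clean way is to note that on the sphere $S$ the area form $\omega$ satisfies $f^{*}\omega = \det(f)\,(\text{volume form on } D_+)$ pointwise, and $f_1\,df_2\wedge df_3$ restricted to $D_+$ is (a constant multiple of) $f^{*}\omega$ plus an exact form whose boundary integral over $E$ I must compute; combining the hemisphere constraint $f(D_+)\subseteq S_+$ (so $f^{*}\omega$ does not change sign, or at worst its integral is a genuine degree) with the odd winding of $f|_E$ computed in the $n=2$ case forces the nonvanishing. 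I would carry out this bookkeeping explicitly, track all orientations on $D_\pm$ induced from $\partial C$, mollify as needed so that everything is $C^1$, and then assemble: $0 = \int_C df_1\wedge df_2\wedge df_3 = 2\int_{D_+} f_1\,df_2\wedge df_3 \neq 0$, a contradiction, completing the proof of the Borsuk–Ulam theorem.
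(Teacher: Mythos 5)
Your set-up is the right one and matches the paper's: $\int_C df_1\wedge df_2\wedge df_3=0$ because $Df$ has rank $\le 2$, and Stokes pushes this to a boundary integral over $\partial C=D_+\cup V\cup D_-$; your argument that the $V$-contribution dies (constancy on vertical lines, so any $2$-form in the $df_i$ pulls back to zero on $V$) is correct, and your orientation bookkeeping giving the factor of $2$ is right. The genuine gap is exactly where you flag it: you never show $\int_{D_+} f_1\,df_2\wedge df_3\ne 0$, and the route you sketch (interpret as a degree/pullback of an area form, invoke the hemisphere constraint) is not carried out and would need real work, since $f|_{D_+}$ is not a diffeomorphism and the $2$-form $x_1\,dx_2\wedge dx_3$ restricted to the cylinder boundary $S$ is not something for which you have an explicit primitive. (Also a small slip: you call $df_1\wedge df_2\wedge df_3$ a $2$-form; it is a $3$-form.)

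The paper avoids this gap by the crucial choice of boundary form: it uses $f_3\,df_1\wedge df_2$ rather than $f_1\,df_2\wedge df_3$. With that choice, the cylinder geometry does all the work. On the part of the domain face $D_+$ where $f_3<1$, the image lies in the lateral wall $V$ of $S$, so $f_1^2+f_2^2\equiv 1$; differentiating and wedging gives $df_1\wedge df_2\equiv 0$ there. Hence $\int_{D_+}f_3\,df_1\wedge df_2$ localizes to the preimage of the top face, where $f_3\equiv 1$, and one may freely rewrite it as $\int_{D_+}df_1\wedge df_2$. A second application of Stokes then gives $\int_{\partial D_+}f_1\,df_2$, which is nonzero (an odd multiple of $\pi$) by the $n=2$ equivariant winding computation. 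Your form $f_1\,df_2\wedge df_3$ does not enjoy this vanishing on $f^{-1}(V)$: parametrizing $V$ by angle $\phi$ and height $h$, one finds $f_1\,df_2\wedge df_3=f_1^2\,d\phi\wedge dh$, which is generically nonzero, so no analogous localization occurs. (In fact your integral equals the paper's -- they differ by $d(f_1f_3\,df_2)$, whose integral over $D_+$ is $\int_E f_1 f_3\,df_2=0$ since $f_3=0$ on $E$ -- but noticing this and then switching forms is precisely the step you need and don't supply.) To repair the proof, replace $f_1\,df_2\wedge df_3$ by $f_3\,df_1\wedge df_2$ from the start and run the localization-plus-second-Stokes argument above.
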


\begin{proof}
If such an $f$ existed, then

$$\int_C \det Df = \int_{C} d f_1 \wedge 
d f_2 \wedge d f_3$$
where $Df$ is the derivative matrix of $f$. On the one hand this is zero
as $Df$ has less than full rank at almost every $x \in C$, and on the other 
hand it equals, by Stokes' theorem,

$$\int_{\partial C} f_3 \; d f_1 \wedge d f_2 = 
\int_V f_3 \;d f_1 \wedge d f_2 + 2\int_{D_+} f_3 \;d f_1 \wedge d f_2 $$
by equivariance. 

\medskip
\noindent
Now $f$ maps $V$ into $E$, so that $f_3 = 0$ on $V$, and the first term on the right vanishes.

\medskip
\noindent
As for the second term, 
$$\int_{D_+} f_3 \;d f_1 \wedge d f_2 
= \int_{D_+ \cap \{ x \; : \; f_3(x) = 1 \} } f_3 \;d f_1 \wedge d f_2 + 
\int_{D_+ \cap \{x \;: \; f_3 (x) < 1\} } f_3 \;d f_1 \wedge d f_2.$$
The region of $D_+$ on which $f_3(x) < 1$ consists of patches on
which $f_1^2(x) + f_2^2(x) = 1$, and so $2 f_1 \; d f_1 + 2 f_2 \; d f_2 =
0$. Taking exterior products with $df _1$ and $df_2$ tells us that on
such patches we have $f_1 \; d f_1 \wedge d f_2 = f_2 \; d f_1 \wedge d f_2
= 0$.  Multiplying by $f_1$ and $f_2$ and adding we get that $h \; d f_1 
\wedge d f_2 = 0$ for all $h$ supported on a patch on which $f_3(x) < 1$. 
So for any $h$ we have 
$$ \int_{D_+ \cap \{x \; : \; f_3 (x) < 1\} } h \;d f_1 \wedge d f_2 = 0.$$
Hence
$$\int_{D_+} f_3 \;d f_1 \wedge d f_2  
= \int_{D_+ \cap \{ x \; : \; f_3(x) = 1 \} } \;d f_1 \wedge d f_2$$
$$= \int_{D_+ \cap \{ x \; : \; f_3(x) = 1 \} } \;d f_1 \wedge d f_2 +
  \int_{D_+ \cap \{x \; : \; f_3(x) <1 \}} \;d f_1 \wedge d f_2  $$
$$= \int_{D_+} \;d f_1 \wedge d f_2 .$$

\medskip
\noindent
By Stokes' theorem once again we have
$$ \int_{D_+} \;d f_1 \wedge d f_2 = 
\int_{\partial D_+} f_1 d f_2 = - \int_{\partial D_+} f_2 d f_1,$$
and, since $f$ restricted to  ${\partial D_+}$ is equivariant, 
this quantity is nonzero (and indeed is an odd multiple of $ \pi$),
by the remarks in the proof of the case $n=2$ above.

\medskip
\noindent
So no such $f$ exists and we are done.

\end{proof}

\bigskip
\medskip
\noindent
{\em{Acknowledgement.}} The author would like to thank Pieter Blue, Marina
Iliopoulou and Mark Powell for helpful suggestions and comments.

\end{document}